\newtheorem{thm}{Theorem}
\newtheorem{lem}[thm]{Lemma}
\newtheorem{cor}[thm]{Corollary}
\begin{document}

\title[Schr\"oder's method for the matrix $p$th root ]{A study of  Schr\"oder's method for the matrix $p$th root using power series expansions}

\author{Chun-Hua Guo}
\address{Department of Mathematics and Statistics, 
University of Regina, Regina,
SK S4S 0A2, Canada} 
\email{chun-hua.guo@uregina.ca}

\author{Di Lu}
\address{Department of Mathematics and Statistics, 
University of Regina, Regina,
SK S4S 0A2, Canada} 
\email{ludix203@uregina.ca}

\thanks{This work was supported in part by a grant from
the Natural Sciences and Engineering Research Council of Canada.}

\subjclass{Primary 65F60; Secondary 15A16}

\keywords{Matrix $p$th root; Schr\"oder's method; Series expansion;  $M$-matrix; $H$-matrix.}

\begin{abstract}

When $A$ is a matrix with all eigenvalues  in the disk $|z-1|<1$, the principal 
$p$th root of $A$ can be computed by Schr\"oder's method, among many other methods. 
In this paper we present a further study of Schr\"oder's method for the matrix $p$th root, through an examination of  power series expansions of some sequences of scalar functions.  
Specifically, we obtain a new and informative error estimate for the matrix sequence generated by the Schr\"oder's method,  a monotonic convergence result when $A$ is a nonsingular $M$-matrix, and  a structure preserving result when $A$ is a nonsingular $M$-matrix or a real nonsingular $H$-matrix with positive diagonal entries. 

\end{abstract}

\maketitle

\section{Introduction}

For a given integer $p\ge 2$  and a matrix  $A\in {\mathbb C}^{n\times n}$ whose eigenvalues are in the open disk $\{z\in \mathbb C: \ |z-1|<1\}$, 
the principal $p$th root of $A$ exists and is denoted by $A^{1/p}$ \cite{high08}. Various methods can be used to  compute $A^{1/p}$; see 
\cite{bhm05,gria10,guo10,guhi06,high08,hili11,hili13,iann06,iann07,iama13,lin10,lihu17,smit03,ziet14}. 

In this paper we are concerned with the Schr\"oder family of iterations, also called Schr\"oder's method for short, which is a special case of  
the dual Pad{\'e} family of iterations proposed in \cite{ziet14}. 
 
In the scalar case of computing $a^{1/p}$, the dual Pad{\'e} family of iterations has  the form 
\begin{equation}\label{pade}
x_{k+1}=x_{k}\frac{Q_{\ell m}(1-ax_{k}^{-p})}{P_{\ell m}(1-ax_{k}^{-p})}, \quad x_0=1, 
\end{equation}
where $P_{\ell m}(t)/Q_{\ell m}(t)$ is the $[\ell/m]$ Pad\'{e} approximant to the function $(1-t)^{-1/p}$, or equivalently 
$Q_{\ell m}(t)/P_{\ell m}(t)$ is the $[m/\ell]$ Pad\'{e} approximant to the function $(1-t)^{1/p}$.

When $\ell=m=1$, we get  Halley's method. When $\ell=0$, we get the Schr\"oder family of iterations. 
Within the Schr\"oder family, we get Newton's method when $m=1$, and get Chebyshev's method when $m=2$.

For $a=1-z$ with $|z|<1$,
Each $x_{k}$ from the dual Pad\'e iteration \eqref{pade}  has a power series expansion 
\begin{equation}\label{pades}
x_k=\sum_{i=0}^{\infty}c_{k,i}z^i, \quad |z|< 1. 
\end{equation}
It is conjectured in \cite{ziet14} that $c_{k,i}<0$ for $i\ge 1$ (as long as the series in \eqref{pades} is not reduced to a finite series). 
The conjecture is an extension of an earlier 
conjecture in \cite{guo10} for Newton's method and Halley's method and a similar conjecture in \cite{lin11} for Chebyshev's method. 

For $p=2$, the conjecture for Newton's method is shown to be true in \cite{guo10}, by using a result proved in 
\cite{mein04}, and a more direct proof is presented in \cite{koub12} for both Newton's method and Halley's method. 
For any integer $p\ge 2$, the conjecture  has been proved very recently \cite{lugu18} for both Newton's method and Halley's method. 
The conjecture for Chebyshev's method has remained open even for $p=2$. 

In this paper we will prove the conjecture for the whole Schr\"oder family (which include Chebyshev's method) for all $p\ge 2$. 
However, the conjecture is not true for the whole dual Pad\'e family. 
Indeed for $\ell=1$ and $m=0$, we have $P_{10}(t)=1+\frac{1}{p}t$ and $Q_{10}(t)=1$. 
It follows from \eqref{pade} that 
$$
x_1=\frac{1}{1+\frac{1}{p}(1-(1-z))}=\frac{1}{1+\frac{1}{p}z}=\sum_{i=0}^{\infty}(-1)^i\frac{1}{p^i}z^i. 
$$

From the Schr\"oder's method for computing $a^{1/p}$, we can get the corresponding Schr\"oder's method for computing $A^{1/p}$. 
In particular, we have Chebyshev's method for computing $A^{1/p}$. Chebyshev's method is called Euler's method in \cite{lihu17} and its efficiency 
(when properly implemented) has been shown in that paper. This has provided us additional motivation to further study Schr\"oder's method for the matrix $p$th root.

\section{Preliminaries} 

Schr\"oder's method for the matrix $p$th root will be studied through an examination of  power series expansions of some sequences of scalar functions.  

We start with the rising factorial notation 
$$
(x)_i=x(x+1)\cdots (x+i-1), 
$$
where $x$ is a real number and $i\ge 0$ is an integer. We have $(x)_0=1$ by convention. 

We have the binomial expansion 
\begin{equation}\label{be}
(1-t)^{1/p}=\sum_{i=0}^{\infty}b_{i}t^i, \quad |t|< 1, 
\end{equation}
where 
\begin{equation}\label{b0bi}
b_0=1, \quad b_i=
\frac{\left (-\frac{1}{p}\right )_i}{i!}<0, 
\quad i\ge 1. 
\end{equation}
By a limit argument, we can show that the equality in \eqref{be} also holds for $t=1$. So we have $\sum_{i=0}^{\infty}b_i=0$. 

Let $T_m(t)$ be the polynomial of degree $m$ 
$$T_{m}(t)=\sum_{i=0}^{m} b_i t^i, 
$$
which is the sum of the first $m+1$ terms in the power series \eqref{be}. 
It is readily seen that we have the Taylor expansion
\begin{equation}\label{sa}
(T_m(t))^{-p}=\sum_{i=0}^{\infty}a_{i}t^i, \quad |t|< 1, 
\end{equation}
where $a_0=1$ and $a_i>0$ for all $i\ge 0$. 

In the scalar case of computing $a^{1/p}$, we let the residual be
$R(x_k)= 1-ax_{k}^{-p}$,   
The Schr\"oder's iteration applied to the function $x^p-a$ gives (see \cite{calo11a}) the iteration 
\begin{equation}\label{schr}
x_{k+1}=x_{k}T_m (R(x_k)). 
\end{equation}
So indeed it is a special case of \eqref{pade} with $\ell=0$. 
When $m=2$, iteration \eqref{schr} is the same as Chebyshev's method \cite{cama90} applied to the function 
$x^p-a$, as noted in \cite{lin11}. 

For iteration \eqref{schr}, we have 
\begin{equation}\label{xxS}
x_k-x_{k+1}=x_k(1-T_m(R(x_k))=x_kR(x_k) \sum_{i=1}^m (-b_i)(R(x_k))^{i-1}, 
\end{equation}
and 
$$
R(x_{k+1})=f(R(x_k))
$$
with 
\begin{equation}\label{ffS}
f(t)=1-(T_m(t))^{-p}(1-t). 
\end{equation}

We have by \eqref{sa} that   $f(t)=\sum_{i=0}^{\infty}c_i t^i$ with $c_0=0$ and  $c_i=a_{i-1}-a_i$ for $i\ge 1$.
It is shown in \cite{calo11b}  that 
\begin{equation}\label{sumct}
f(t)=\sum_{i=0}^{\infty}c_i t^i=\sum_{i=m+1}^{\infty}c_i t^i
\end{equation}
with $c_i>0$ for $i\ge m+1$. 
This means that we actually have $a_0=a_1=\cdots = a_m=1$ in \eqref{sa}.

For $a=1-z$ with $|z|<1$, Schr\"oder's method for finding $(1-z)^{1/p}$ is 
\begin{equation}\label{schrz}
x_{k+1}=x_{k}T_m (1-(1-z)x_k^{-p})),  \quad x_0=1. 
\end{equation}
To emphasize the dependence of $x_k$ on $z$, we will write $x_k(z)$ 
for $x_k$. 
Each $x_k(z)$ has a power series expansion 
\begin{equation}\label{nke}
x_k(z)=\sum_{i=0}^{\infty}c_{k,i}z^i, \quad |z|< 1. 
\end{equation}

The following connection between Schr\"oder's method and the binomial expansion is included in the more general 
Theorem 6.1 of \cite{ziet14}; it can also be proved in the same way as \cite[Theorem 10]{guo10} is proved for Newton's method and Halley's method. 

\begin{thm}\label{thmNHb}
For Schr\"oder's method, $c_{k,i}=b_i$ for $k\ge 0$ and $0\le i\le (m+1)^{k}-1$. 
\end{thm}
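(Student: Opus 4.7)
The natural approach is induction on $k$. The base case $k=0$ holds for $0\le i\le 0=(m+1)^0-1$, where $x_0(z)=1$ gives $c_{0,0}=1=b_0$. For the inductive step I would work in the formal power series ring $\mathbb{C}[[z]]$ and introduce the target $y(z):=(1-z)^{1/p}=\sum_{i\ge 0}b_iz^i$. An easy induction using $R(1)=0$ and $T_m(0)=b_0=1$ shows $x_k(0)=1$, so every $x_k$ is a unit in $\mathbb{C}[[z]]$. The inductive hypothesis $c_{k,i}=b_i$ for $0\le i\le (m+1)^k-1$ is then equivalent to $x_k\equiv y\pmod{z^{N}}$ with $N:=(m+1)^k$.

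The crux is to promote this to $x_{k+1}\equiv y\pmod{z^{(m+1)N}}$ by routing through the scalar residual $R(x_k)=1-(1-z)x_k^{-p}$. First I would rewrite $R(x_k)=1-(y/x_k)^p$. Since $y$ and $x_k$ are units in $\mathbb{C}[[z]]$ agreeing modulo $z^N$, the quotient satisfies $y/x_k\equiv 1\pmod{z^N}$; writing $y/x_k=1+z^Nw$ and expanding $(1+z^Nw)^p$ by the binomial theorem shows $(y/x_k)^p\equiv 1\pmod{z^N}$, hence $R(x_k)\equiv 0\pmod{z^N}$. Then the identity $R(x_{k+1})=f(R(x_k))$ from \eqref{ffS}, combined with the order-of-convergence statement \eqref{sumct} that $f(t)$ begins only at the monomial $t^{m+1}$, yields $R(x_{k+1})\equiv 0\pmod{z^{(m+1)N}}$, i.e.\ $\pmod{z^{(m+1)^{k+1}}}$.

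To close the induction I would reverse the residual relation: $(y/x_{k+1})^p=1-R(x_{k+1})\equiv 1\pmod{z^{(m+1)^{k+1}}}$, and then take the unique formal $p$th root in $\mathbb{C}[[z]]$ with constant term $1$ to obtain $y/x_{k+1}\equiv 1\pmod{z^{(m+1)^{k+1}}}$, hence $x_{k+1}\equiv y$ to the same order, which gives $c_{k+1,i}=b_i$ for $0\le i\le (m+1)^{k+1}-1$.

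\textbf{Main obstacle.} Everything except the final $p$th-root extraction is routine formal power series bookkeeping. The substantive point is showing that $u^p\equiv 1\pmod{z^M}$ implies $u\equiv 1\pmod{z^M}$ for a unit $u$ with constant term $1$. This rests on the injectivity of $u\mapsto u^p$ on such units in characteristic zero: if $u-1$ first becomes nonzero at degree $i_0$ with coefficient $u_{i_0}\ne 0$, then $u^p-1=pu_{i_0}z^{i_0}+\cdots$ has leading perturbation at the \emph{same} degree $i_0$ since $p\ne 0$. This elementary observation is precisely the mechanism that transmits the ``order $m+1$'' content of \eqref{sumct} from the residuals back to the iterates, producing the $(m+1)$-fold gain in precision at each step.
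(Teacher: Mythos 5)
Your proof is correct, and it is worth noting that the paper itself does not prove Theorem \ref{thmNHb}: it only cites Theorem 6.1 of Zi\c{e}tak and the method of proof of Theorem 10 in the Newton/Halley paper of Guo. Your induction is exactly the standard mechanism behind those proofs: the hypothesis $x_k\equiv(1-z)^{1/p}\pmod{z^{(m+1)^k}}$ forces $R(x_k)$ to vanish to order $(m+1)^k$ at $z=0$, the residual recursion $R(x_{k+1})=f(R(x_k))$ together with the fact that $f(t)=O(t^{m+1})$ (only the order statement from \eqref{sumct} is needed, not the positivity of the $c_i$) multiplies that order by $m+1$, and injectivity of $u\mapsto u^p$ on formal units with constant term $1$ (valid since $p\ne 0$ in characteristic zero) transfers the gain back to $x_{k+1}$. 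You also correctly handle the only fussy points: $x_k(0)=1$ so all quotients and the series $T_m(R(x_k))^{-p}$ are legitimate in $\mathbb{C}[[z]]$, and the composition $f(R(x_k))$ makes sense because $R(x_k)$ has zero constant term. So your argument is a complete, self-contained substitute for the citation, in the same spirit as the proofs the paper points to.
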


From the theorem, we know that $c_{k,0}=1$ for all $k\ge 0$ and that  $c_{k,i}<0$ for $k\ge 1$ and $1\le i\le (m+1)^{k}-1$. 
To obtain some new results for Schr\"oder's method for the matrix $p$th root, we need to prove that $c_{k,i}\le 0$ for $k\ge 1$ and all $i\ge (m+1)^{k}$. 
When $k=1$, we have $x_1(z)=T_m(1-(1-z))=T_m(z)$, so $c_{1,i}=0$ for $i\ge m+1$.

\section{Sign pattern of coefficients $c_{k,i}$  in power series expansions} 

To determine the sign pattern of $c_{k,i}$ for Schr\"oder's method,  
we will show that $c_{k,i}$ decreases when $k$ increases (and $i\ge 1$ is fixed), as in \cite{lugu18} for Newton's method and Halley's method. In this process, we will need a useful recursion for the coefficients $a_i$ in \eqref{sa}, and some good luck as well!

For the scalar case of computing $(1-z)^{1/p}$, where $|z|<1$, the residual is $R(x_k)=1-(1-z)x_k^{-p}$. 
We have the following result. 

\begin{lem}\label{ppp0}
 The coefficients in the power series expansion (in the variable $z$) of $R(x_k)$ are all nonnegative. 
\end{lem}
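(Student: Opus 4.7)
The plan is to prove this by a short induction on $k$, exploiting the recursion $R(x_{k+1})=f(R(x_k))$ together with the crucial nonnegativity information about $f$ recorded in \eqref{sumct}. The key auxiliary observation that will be carried along through the induction is that $R(x_k)$ has no constant term, since $c_{k,0}=1$ forces $x_k(0)=1$ and hence $R(x_k)(0)=1-1\cdot 1=0$; this is needed to make the composition $f\circ R(x_k)$ well defined as a formal power series.

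For the base case $k=0$ we have $x_0\equiv 1$, so $R(x_0)(z)=1-(1-z)=z$, whose power series coefficients are trivially nonnegative and whose constant term is zero. For the inductive step, suppose $R(x_k)(z)=\sum_{j\ge 1}r_jz^j$ with $r_j\ge 0$. By \eqref{ffS} and \eqref{schrz} we have $R(x_{k+1})=f(R(x_k))$, and by \eqref{sumct} the function $f$ is given by a power series $f(t)=\sum_{i\ge m+1}c_it^i$ with every $c_i>0$. Substituting the power series for $R(x_k)$ (which starts in degree $\ge 1$, so the composition is legitimate) gives
\[
R(x_{k+1})(z)=\sum_{i\ge m+1}c_i\bigl(R(x_k)(z)\bigr)^i,
\]
and each $\bigl(R(x_k)(z)\bigr)^i$ has nonnegative coefficients as an $i$-fold Cauchy product of series with nonnegative coefficients. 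A positive linear combination of such series again has nonnegative coefficients, completing the induction. The induction also preserves $R(x_{k+1})(0)=f(0)=0$, so the argument closes on itself.

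I do not anticipate a serious obstacle here: the statement is really a direct consequence of the sign information \eqref{sumct} for $f$, once one notices that the composition is legitimate thanks to $R(x_k)(0)=0$. The only thing to be careful about is to invoke \eqref{sumct} rather than the weaker pointwise information available from $(1-t)^{1/p}$, because it is the vanishing of $c_0,\dots,c_m$ together with positivity of $c_i$ for $i\ge m+1$ that makes the inductive bookkeeping go through cleanly.
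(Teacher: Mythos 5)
Your proof is correct and follows essentially the same route as the paper: induction on $k$ using $R(x_0)=z$ and the recursion $R(x_{k+1})=f(R(x_k))=\sum_{i\ge m+1}c_i(R(x_k))^i$ with $c_i>0$ from \eqref{sumct}. The extra remark that $R(x_k)(0)=0$ (so the composition of series is legitimate) is a sensible bit of care the paper leaves implicit, but it does not change the argument.
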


\begin{proof}
We have $R(x_0)=z$. The result is proved by induction since 
$$
R(x_{k+1})=f(R(x_k))=\sum_{i=m+1}^{\infty}c_i (R(x_k))^i
$$
with $c_i>0$ for $i\ge m+1$. 
\end{proof}

We will show that the coefficients in the power series expansion of $x_kR(x_k)$ are also all nonnegative. 

Note that 
\begin{equation}\label{xkRk}
x_{k+1}R(x_{k+1})=x_kg(R(x_k)), 
\end{equation}
where 
$$
g(t)=T_m(t)  f(t). 
$$
It follows from \eqref{sumct} that $g(t)$ 
 has a Taylor expansion 
$g(t)=\sum_{i=m+1}^{\infty}d_it^i$, where 
$$
d_i=b_0c_i+b_1c_{i-1}+\cdots + b_m c_{i-m}
$$
for each $i\ge m+1$. 
We are going to prove that $d_i>0$ for all $i\ge m+1$. 
Since $b_0=1$ and $c_k=a_{k-1}-a_k$ for $k\ge 1$, we need to show for $i\ge m+1$ that 

\begin{equation}\label{arec}
-a_i+(b_0-b_1)a_{i-1}+(b_1-b_2)a_{i-2}+\cdots + (b_{m-1}-b_m)a_{i-m} + b_m a_{i-m-1}>0. 
\end{equation}

The following recursion about the coefficients $a_i$ in \eqref{sa} will play an important role. 
It is equation (2.11)  in \cite{calo11b}, with some notation changes.  

\begin{lem}\label{ca}
For each $k\ge m-1$, 
$$
a_{k+1}=\frac{1}{k+1}\sum_{s=0}^{m-1}(k-s+p(s+1))(-b_{s+1})a_{k-s}. 
$$
\end{lem}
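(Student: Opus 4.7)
The plan is to derive the recursion from a first-order linear differential equation satisfied by the generating function $F(t):=(T_m(t))^{-p}=\sum_{i=0}^{\infty}a_i t^i$. The whole thing is a ``logarithmic derivative'' computation.

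First, I would differentiate the defining identity $T_m(t)^p F(t)=1$ to get
$p\,T_m(t)^{p-1}T_m'(t)F(t)+T_m(t)^p F'(t)=0,$
and, after cancelling the factor $T_m(t)^{p-1}$ (legitimate as a formal-power-series identity because $T_m(0)=b_0=1$), arrive at the ODE
$$T_m(t)\,F'(t)+p\,T_m'(t)\,F(t)=0.$$

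Next, I would extract the coefficient of $t^k$ on both sides. Using $T_m(t)=\sum_{s=0}^{m}b_s t^s$, $T_m'(t)=\sum_{s=0}^{m-1}(s+1)b_{s+1}t^s$, and $F'(t)=\sum_{j\ge 0}(j+1)a_{j+1}t^j$, the convolution formula gives
$$\sum_{s=0}^{\min(k,m)}b_s(k-s+1)a_{k-s+1}\;+\;p\sum_{s=0}^{\min(k,m-1)}(s+1)b_{s+1}a_{k-s}=0.$$
Under the hypothesis $k\ge m-1$, the upper limits become $m$ and $m-1$ respectively (in the boundary case $k=m-1$, the additional $s=m$ term in the first sum vanishes because its factor $k-s+1$ is zero). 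Peeling off the $s=0$ contribution $(k+1)a_{k+1}$ from the first sum and reindexing $s\mapsto s-1$ in the remaining terms, both sums are then expressed in terms of $b_{s+1}a_{k-s}$ for $s=0,\ldots,m-1$, and they combine to
$$(k+1)a_{k+1}=\sum_{s=0}^{m-1}(-b_{s+1})\bigl[(k-s)+p(s+1)\bigr]a_{k-s}.$$
Dividing by $k+1$ yields the claim (and, as a bonus, since $-b_{s+1}>0$ and $a_j\ge 0$, the positivity of the $a_i$ asserted after \eqref{sa} is now transparent).

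I do not anticipate a genuine obstacle: the argument is pure bookkeeping once the identity $T_m F'=-p\,T_m' F$ is in hand. The only care needed is in handling the boundary case $k=m-1$, where one ``phantom'' term of the first convolution appears and must be checked to vanish, and in verifying that the index range $k-s\ge 0$ for $s\le m-1$ in the final recursion is exactly the stated hypothesis $k\ge m-1$.
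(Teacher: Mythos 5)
Your derivation is correct, and it is worth noting that the paper itself gives no proof of this lemma: it simply quotes the identity as equation (2.11) of the cited work of Cardoso and Loureiro \cite{calo11b}. Your argument therefore supplies a self-contained justification that the paper delegates to the literature. The route you take — differentiate $T_m(t)^pF(t)=1$, cancel $T_m(t)^{p-1}$ (legitimate since $T_m(0)=1$), obtain the first-order identity $T_m(t)F'(t)+p\,T_m'(t)F(t)=0$, and extract the coefficient of $t^k$ — is the classical J.C.P.\ Miller recurrence for a (negative) power of a power series, and the bookkeeping checks out: peeling off the $s=0$ term $(k+1)a_{k+1}$ of the first convolution and shifting the index in the rest yields exactly
\begin{equation*}
(k+1)a_{k+1}=\sum_{s=0}^{m-1}(-b_{s+1})\bigl[(k-s)+p(s+1)\bigr]a_{k-s},
\end{equation*}
and your treatment of the boundary case $k=m-1$ (the would-be $s=m$ term carries the factor $k-m+1=0$) is the right observation; the hypothesis $k\ge m-1$ is precisely what keeps all indices $k-s$, $0\le s\le m-1$, nonnegative. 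A quick sanity check with $p=2$, $m=2$ ($a_0=a_1=a_2=1$, $a_3=7/8$) confirms the formula. One small caveat on your closing remark: the recursion only generates $a_i$ for $i\ge m$, so the asserted positivity of all $a_i$ is ``transparent'' only after the base cases $a_0,\dots,a_{m-1}>0$ are checked separately (in fact $a_0=\cdots=a_m=1$, as the paper notes via \eqref{sumct}); this does not affect the lemma itself.
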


By Lemma \ref{ca} with $k=i-1$, \eqref{arec} becomes
\begin{equation}\label{ineq8}
\sum_{s=0}^{m-1}\alpha_s a_{i-1-s}  + b_m a_{i-m-1}>0, 
\end{equation}
where for $s=0, 1, \ldots, m-1$
\begin{eqnarray*}
\alpha_s&=&b_s-b_{s+1}+\frac{1}{i}(i-1-s+p(s+1))b_{s+1}\\
&=&b_s+\frac{1}{i}(p-1)(s+1)b_{s+1}\\
&=&b_s+\frac{1}{i}(p-1)(s+1)b_{s}\frac{-\frac{1}{p}+s}{s+1}\\
&=&b_s\left (1+\frac{1}{i}(p-1)\left (-\frac{1}{p}+s\right )\right ).
\end{eqnarray*}
Thus \eqref{ineq8} is equivalent to 
\begin{equation}\label{ineq9}
\frac{p(i-1)+1}{pi}a_{i-1}+\sum_{s=1}^{m-1}b_s \left (1+\frac{1}{i}(p-1)\left (-\frac{1}{p}+s\right )\right )   a_{i-1-s}  + b_m a_{i-m-1}>0. 
\end{equation}
By Lemma \ref{ca} with $k=i-2$, we have 
\begin{eqnarray*}
a_{i-1}&=&\frac{1}{i-1}\sum_{s=0}^{m-1}(i-2-s+p(s+1))(-b_{s+1})a_{i-2-s}\\
&=&\frac{1}{i-1}\sum_{s=1}^{m}(i-1-s+ps)(-b_s)a_{i-1-s}.
\end{eqnarray*}
Thus \eqref{ineq9} becomes 
\begin{equation}\label{ineq10}
\sum_{s=1}^{m-1} \beta_s b_s a_{i-1-s} +\beta_m b_m a_{i-m-1}>0, 
\end{equation}
where 
$$
\beta_m=\frac{p(i-1)+1}{pi}\frac{-1}{i-1}(i-1-m+pm) + 1, 
$$ 
and for $s=1, \ldots, m-1$ 
$$
\beta_s=\frac{p(i-1)+1}{pi}\frac{-1}{i-1}(i-1-s+ps) + 1+\frac{1}{i}(p-1)\left (-\frac{1}{p}+s\right ). 
$$ 
Since $a_i>0$ for $i\ge 0$ and $b_i<0$ for $i\ge 1$, a sufficient condition for \eqref{ineq10} to hold is 
that, for $s=1, \ldots, m$,  $\beta_s<0$, or equivalently 
$-pi(i-1)\beta_s>0$. Luckily, the sufficient condition does hold. 
Indeed, 
$$
-pi(i-1)\beta_m=(p(i-1)+1)(i-1-m+pm)-pi(i-1)>p(i-1)i-pi(i-1)=0, 
$$
and for $s=1, \ldots, m-1$
\begin{eqnarray*}
-pi(i-1)\beta_s&=&  (p(i-1)+1)(i-1-s+ps)-pi(i-1)-(i-1)(p-1)(-1+ps)\\
&=&p(i-1)i+p(i-1)(-1+ps)-p(i-1)s+i-1-s+ps\\
& & -pi(i-1)-(i-1)(p-1)(-1+ps)\\
& = & (-1+ps)(p(i-1)-(i-1)(p-1))-p(i-1)s+i-1-s+ps\\
& = & (-1+ps)(i-1)-p(i-1)s+i-1-s+ps\\
&=&(p-1)s>0.
\end{eqnarray*}
Therefore, \eqref{arec} holds for all $i\ge m+1$. We have thus proved the following result. 

\begin{lem}\label{Hdp}
The function $g(t)$ has a Taylor expansion $g(t)=\sum_{i=m+1}^{\infty}d_it^i$ with $d_i>0$ for $i\ge m+1$. 
\end{lem}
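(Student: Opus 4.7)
The plan is to expand $g(t) = T_m(t) f(t)$ as a convolution and then show that each coefficient $d_i$ ($i \ge m+1$) is positive. Since $T_m(t) = \sum_{s=0}^m b_s t^s$ and $f(t) = \sum_{i \ge m+1} c_i t^i$ by \eqref{sumct}, and since $b_0 = 1$, $c_k = a_{k-1}-a_k$, the coefficient of $t^i$ in $g(t)$ starts at $i=m+1$ and equals
\[
d_i \;=\; -a_i + (b_0-b_1)a_{i-1} + (b_1-b_2)a_{i-2} + \cdots + (b_{m-1}-b_m)a_{i-m} + b_m a_{i-m-1},
\]
which is exactly the quantity in \eqref{arec}. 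So the problem reduces to proving the single inequality \eqref{arec} for every $i \ge m+1$.

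To make headway, I would use the recursion from Lemma \ref{ca} twice. First, with $k=i-1$, this replaces the $-a_i$ term by a linear combination of $a_{i-1},\ldots,a_{i-m}$, and after collecting terms using $b_{s+1} = b_s\,(s-1/p)/(s+1)$, the inequality takes the form \eqref{ineq9}, in which the coefficient of $a_{i-1}$ is $(p(i-1)+1)/(pi)$ and the coefficients of $a_{i-1-s}$ for $s \ge 1$ factor cleanly through $b_s$. The natural second step is to use the recursion again, now with $k=i-2$, to express $a_{i-1}$ as a combination of $a_{i-2},\ldots,a_{i-m-1}$; after substitution, every term is a multiple of $b_s a_{i-1-s}$ (or $b_m a_{i-m-1}$), giving the form \eqref{ineq10} with explicit coefficients $\beta_s$.

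At that point, since $a_j > 0$ and $b_s < 0$ for $s \ge 1$, a sufficient condition for positivity is that each $\beta_s$ be negative. The main obstacle, which the author flags as needing "some good luck," is precisely verifying these inequalities: there is no obvious conceptual reason why the two recursions should collude in the right direction, and one has to check by hand that $-pi(i-1)\beta_s > 0$ for $s = 1,\ldots,m$. I would handle $s = m$ and $1 \le s \le m-1$ separately, because the algebraic form of $\beta_s$ differs slightly at the boundary, and in each case expand, collect like terms, and watch the leading $p(i-1)i$ and $-pi(i-1)$ contributions cancel so that the surviving quantity is manifestly positive (the residual simplifying to $(p-1)s$ in the interior range and to an expression bounded below by $p(i-1)$ at $s=m$).

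In summary, the route is: convolution to identify $d_i$; rewrite as \eqref{arec} using $c_k = a_{k-1} - a_k$; apply the $a$-recursion at $k=i-1$ and again at $k=i-2$ to reduce to checking the scalar inequalities $\beta_s < 0$; finish by direct algebraic verification. The conceptual work is in setting up the double substitution cleanly so that everything reduces to checking the $\beta_s$; the computational risk is in the final algebra, where the desired sign emerges only after several cancellations.
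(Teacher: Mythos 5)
Your proposal follows essentially the same route as the paper's own argument: identify $d_i$ via the convolution of $T_m$ and $f$, rewrite positivity as \eqref{arec}, apply Lemma \ref{ca} at $k=i-1$ and again at $k=i-2$ to reach \eqref{ineq10}, and finish by checking $\beta_s<0$ for $s=1,\ldots,m$ (with the interior cases reducing to $(p-1)s>0$, exactly as in the paper). The plan is correct; the only remaining work is the routine algebra you already outline.
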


We are now ready to prove the following result. 

\begin{lem}\label{xRxH}
 For each $k\ge 0$, the coefficients in the power series expansion of $x_kR(x_k)$ are all nonnegative. 
\end{lem}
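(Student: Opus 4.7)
The plan is a short induction on $k$, using identity \eqref{xkRk} together with Lemmas \ref{ppp0} and \ref{Hdp}. The base case is trivial: $x_0R(x_0) = 1\cdot z = z$, whose single coefficient is nonnegative. For the inductive step I must pass from the nonnegativity of the $z$-coefficients of $x_kR(x_k)$ to the same for $x_{k+1}R(x_{k+1}) = x_k g(R(x_k))$. The naive idea of writing $x_{k+1}R(x_{k+1})$ as $x_k$ times $g(R(x_k))$ fails directly, because by Theorem \ref{thmNHb} the series $x_k$ itself has many negative coefficients (namely the $b_i$'s for $1\le i\le (m+1)^k-1$).

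The key trick is to factor an $R(x_k)$ out of $g(R(x_k))$. By Lemma \ref{Hdp}, $g(t) = \sum_{i=m+1}^{\infty} d_i t^i$ with $d_i>0$, so
\[
\frac{g(t)}{t} \;=\; \sum_{i=m}^{\infty} d_{i+1}\, t^{i}
\]
is a bona fide power series with positive coefficients. Therefore
\[
x_{k+1}R(x_{k+1}) \;=\; x_k\,g(R(x_k)) \;=\; \bigl(x_kR(x_k)\bigr)\cdot \frac{g(R(x_k))}{R(x_k)},
\]
and the argument reduces to showing that each factor on the right has nonnegative $z$-coefficients. The first factor is handled by the inductive hypothesis. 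The second factor equals $\sum_{i=m}^{\infty} d_{i+1}(R(x_k))^{i}$; since Lemma \ref{ppp0} gives that $R(x_k)$ itself has nonnegative $z$-coefficients, so does each power $(R(x_k))^i$, and hence the whole nonnegative linear combination. The product of two power series with nonnegative coefficients has nonnegative coefficients, closing the induction.

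There is no serious obstacle, as Lemma \ref{Hdp} already carries the hard combinatorial work; the only thing to verify is that the formal division by $R(x_k)$ is legitimate, which is immediate because $R(x_0)=z$ vanishes at $z=0$ and hence, by the iteration $R(x_{k+1})=f(R(x_k))$, each $R(x_k)$ does too. So the proof amounts to spelling out the factorization above and invoking the three preceding results.
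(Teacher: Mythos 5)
Your proof is correct and is essentially the paper's own argument: both factor out $R(x_k)$ to write $x_{k+1}R(x_{k+1})=\bigl(x_kR(x_k)\bigr)\sum_{i=m+1}^{\infty}d_i\,(R(x_k))^{i-1}$ and then induct using Lemmas \ref{ppp0} and \ref{Hdp}. Your additional remark about why multiplying by $x_k$ directly would fail is a nice clarification, but the substance matches the paper.
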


\begin{proof}
We have $x_0R(x_0)=z$ and 
\begin{equation*}
x_{k+1}R(x_{k+1})=x_k g(R(x_k))=x_k R(x_k)\sum_{i=m+1}^{\infty}d_i (R(x_k))^{i-1}. 
\end{equation*}
The result is then proved by induction, using Lemmas \ref{ppp0} and \ref{Hdp}. 
\end{proof}

For Schr\"oder's method, we have $x_k(z)=\sum_{i=0}^{\infty}c_{k,i}z^i$ with $c_{k,0}=1$ for all $k\ge 0$. 
We already know that $c_{0,i}=0$ for all $i\ge 1$,  $c_{1,i}<0$ for $1\le i\le m$ and $c_{1,i}=0$ for $i\ge  m+1$. 
The next result determines the sign pattern of $c_{k,i}$ for $k\ge 2$ and $i\ge 1$. 

\begin{thm}\label{Hd}
For each $i\ge 1$, $c_{k,i}$ decreases as $k$ increases and becomes equal to $b_i$ for all $k$ sufficiently large. In particular,  $c_{k,i}\le 0$ for $k\ge 0$ and $i\ge 1$. 
Moreover, $c_{k,i}<0$ for $k\ge 2$ and $i\ge 1$. 
\end{thm}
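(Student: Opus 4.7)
For the first three assertions, the plan is to read off everything from identities already in place. By \eqref{xxS},
\begin{equation*}
x_k - x_{k+1} = (x_k R(x_k)) \cdot \sum_{j=1}^{m} (-b_j)(R(x_k))^{j-1}.
\end{equation*}
Since $-b_j > 0$ for $j \ge 1$, Lemma \ref{ppp0} ensures the right-hand factor is a power series in $z$ with nonnegative coefficients, and Lemma \ref{xRxH} does the same for $x_k R(x_k)$. Taking the $z^i$ coefficient gives $c_{k,i} \ge c_{k+1,i}$, so $c_{k,i}$ is nonincreasing in $k$. Eventual equality with $b_i$ falls straight out of Theorem \ref{thmNHb}: as soon as $(m+1)^k - 1 \ge i$, we have $c_{k,i} = b_i$. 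Since $c_{0,i} = 0$ for $i \ge 1$, the nonincreasing sequence is bounded above by $0$, giving $c_{k,i} \le 0$.

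The strict inequality $c_{k,i} < 0$ for $k \ge 2$ is the part that will require genuine work, and it is the main obstacle. Theorem \ref{thmNHb} already handles $1 \le i \le (m+1)^k - 1$ for free, but for larger $i$ monotonicity alone only forces $c_{k,i} \le 0$. The plan is to prove $c_{2,i} < 0$ for every $i \ge 1$ and then let monotonicity carry it up, giving $c_{k,i} \le c_{2,i} < 0$ for all $k \ge 2$.

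For the base $k = 2$, the key observation I would exploit is that $x_1 = T_m(z)$ and $R(x_1) = f(z)$, so
\begin{equation*}
x_1 R(x_1) = T_m(z)\, f(z) = g(z).
\end{equation*}
Lemma \ref{Hdp} then sharpens nonnegativity to strict positivity: $[z^i](x_1 R(x_1)) = d_i > 0$ for every $i \ge m+1$. Applying \eqref{xxS} with $k=1$ gives
\begin{equation*}
x_1 - x_2 = g(z) \cdot \Bigl( -b_1 + \sum_{j=2}^{m} (-b_j)(R(x_1))^{j-1} \Bigr),
\end{equation*}
where the parenthesized factor has constant term $-b_1 > 0$ and nonnegative higher coefficients (by Lemma \ref{ppp0}). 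Isolating the single cross-term $g_i \cdot (-b_1)$ in the product therefore yields $[z^i](x_1 - x_2) \ge (-b_1)\, d_i > 0$ for all $i \ge m+1$, hence $c_{2,i} < c_{1,i} = 0$ in that range. Combined with $c_{2,i} = b_i < 0$ for $1 \le i \le m$ from Theorem \ref{thmNHb}, this gives $c_{2,i} < 0$ for every $i \ge 1$, and the monotonicity established in the first paragraph finishes the job.
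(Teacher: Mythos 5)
Your proposal is correct and follows essentially the same route as the paper: monotonicity from \eqref{xxS} together with Lemmas \ref{ppp0} and \ref{xRxH}, eventual equality from Theorem \ref{thmNHb}, and strict negativity by reducing to $k=2$ via the identity $x_1R(x_1)=g(z)$ and Lemma \ref{Hdp}, exactly as in the paper (which writes $x_1R(x_1)=x_0g(R(x_0))$ via \eqref{xkRk}, a cosmetic difference). The bound $[z^i](x_1-x_2)\ge (-b_1)d_i>0$ for $i\ge m+1$ is the same key estimate the paper uses.
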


\begin{proof}
That  $c_{k,i}$ decreases as $k$ increases  follows directly from \eqref{xxS}, Lemma \ref{ppp0}, Lemma \ref{xRxH}, and the fact that $b_i<0$ for $i\ge 1$. 
We know from Theorem \ref{thmNHb} that $c_{k,i}=b_i$ when $i\le (m+1)^{k}-1$, i.e., when $k\ge \ln(i+1)/\ln(m+1)$. 
Since $c_{0,i}=0$ for all $i\ge 1$, the monotonicity of $c_{k,i}$ implies that  $c_{k,i}\le 0$ for $k\ge 0$ and $i\ge 1$. 
To show $c_{k,i}<0$ for $k\ge 2$ and $i\ge 1$, we only need to show that $c_{2,i}<0$ for $i\ge 1$. 
By \eqref{xxS} and \eqref{xkRk}
\begin{eqnarray*}
 x_1-x_{2}&=&x_1R(x_1) \sum_{i=1}^m (-b_i)(R(x_1))^{i-1}\\
&=&x_0 g(R(x_0))\sum_{i=1}^m (-b_i)(R(x_1))^{i-1}\\
&=&g(z)\sum_{i=1}^m (-b_i)(R(x_1))^{i-1}. 
\end{eqnarray*}
It follows from Lemmas \ref{ppp0} and \ref{Hdp} that 
$$
x_1-x_2=\sum_{i=m+1}^{\infty}e_i z^i
$$
with $e_i\ge d_i (-b_1)>0$ for $i\ge m+1$. 
Therefore,  for $i\ge m+1$ we have $c_{2,i}<c_{1,i}= 0$ and  for $1\le i\le m$ we have $c_{2,i}=c_{1,i}< 0$. 
We have thus proved that  $c_{2,i}<0$ for $i\ge 1$. 
\end{proof}

\section{Schr\"oder's method for the matrix $p$th root}

In the matrix case,  Schr\"oder's  method for finding $A^{1/p}$ is given by
\begin{equation}\label{SI}
X_{k+1}=X_k T_m(R(X_k)), \quad X_0=I,  
\end{equation}
where 
$$
R(X_k)=I-AX_k^{-p}. 
$$
Note that we have $X_kA=AX_k$ for Schr\"oder's  method  whenever $X_k$ is defined. 
Using this commutativity and its consequences, we immediately get the following result  from Theorem \ref{thmNHb}.  

\begin{thm}\label{thm2} 
Suppose that all eigenvalues of $A$ are in $\{z: |z-1|< 1\}$ and 
write $A=I-B$ 
$($so $\rho(B)<1$$)$. Let $(I-B)^{1/p}=\sum_{i=0}^{\infty}b_iB^i$ be 
the binomial expansion (where the coefficients $b_i$ are given by \eqref{b0bi}). 
Then the sequence $\{X_k\}$ generated by Schr\"oder's method 
has the power series expansion $X_k=\sum_{i=0}^{\infty}c_{k,i}B^i$, 
with  $c_{k,i}=b_i$ for $i=0, 1, \ldots, (m+1)^k-1$. 
\end{thm}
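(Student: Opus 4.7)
The plan is to transfer the scalar Theorem~\ref{thmNHb} to the matrix setting by exploiting the fact that every iterate $X_k$ of \eqref{SI} is a fixed rational function evaluated at $B=I-A$, namely $X_k = x_k(B)$, where $x_k(z)$ is the scalar iterate defined by \eqref{schrz}. Once this identification is in hand, the power series expansion in \eqref{nke} together with $\rho(B)<1$ gives the desired matrix expansion, and the coefficient identity $c_{k,i}=b_i$ for $0\le i\le (m+1)^k-1$ is read off from Theorem~\ref{thmNHb}.

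First I would prove by induction on $k$ that $X_k$ commutes with $A$ (equivalently with $B$) and that $X_k = x_k(B)$. The base case is $X_0 = I = x_0(B)$. For the inductive step, if $X_k = x_k(B)$, then $X_k$ is a polynomial/rational expression in $B$, so it commutes with $B$ and with $A = I-B$. Consequently $X_k^{-p}$ (when defined), $R(X_k) = I - (I-B)X_k^{-p}$, and $T_m(R(X_k))$ are obtained from the corresponding scalar quantities by substituting $B$ for $z$, and the iteration \eqref{SI} yields $X_{k+1} = x_{k+1}(B)$.

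Second, I would handle invertibility along the induction: since $x_k(z)$ is holomorphic on $|z|<1$ with $x_k(0)=c_{k,0}=1$, and since $\sigma(B) \subseteq \{|z|<1\}$ by hypothesis, the spectral mapping theorem gives $0 \notin \sigma(X_k) = x_k(\sigma(B))$, so $X_k$ is invertible and $X_k^{-p} = (x_k(B))^{-p} = (x_k^{-p})(B)$. The series $x_k(z) = \sum_{i=0}^\infty c_{k,i} z^i$ converges absolutely on $|z|<1$ by \eqref{nke}, so $\rho(B)<1$ implies $\sum_{i=0}^\infty c_{k,i} B^i$ converges in any submultiplicative matrix norm and equals $X_k$. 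Applying Theorem~\ref{thmNHb} then yields the coefficient identification.

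The only real obstacle is verifying at each step of the induction that the substitution $z \mapsto B$ commutes with the operations appearing in the iteration. This reduces to checking that the scalar functions involved are holomorphic in a neighborhood of $\sigma(B)$ and that the holomorphic functional calculus respects the algebraic operations (products, inverses, and the polynomial $T_m$); all of this follows from $\rho(B)<1$ and the well-definedness of the scalar recursion, so the argument is essentially a clean translation from scalar to matrix.
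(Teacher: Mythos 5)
Your proposal is correct and takes essentially the same route as the paper, whose proof consists of the single observation that $X_k$ commutes with $A$ (hence $X_k=x_k(B)$), so that Theorem~\ref{thmNHb} transfers immediately to the matrix expansion. One small caveat: invertibility of $X_k$ does not follow from $x_k(0)=1$ together with the spectral mapping theorem alone---you need $x_k(\lambda)\neq 0$ for every $\lambda\in\sigma(B)$, i.e.\ the non-vanishing of $x_k$ on the whole disk $|z|<1$, which is exactly the well-definedness of the scalar recursion \eqref{schrz} (underlying the expansion \eqref{nke}) that you invoke at the end, so the gap is only in the phrasing, not in the substance.
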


Recall that $\sum_{i=0}^{\infty}b_i=0$ for the coefficients $b_i$. Let 
$s_k=\sum_{i=0}^{k-1}b_i$.
Then 
$s_k=\sum_{i=k}^{\infty}(-b_i)$,  $s_k\le 1$ for all $k\ge 1$,  and 
$\lim_{k\to \infty}s_k=0$. 

By using Theorems \ref{thm2} and \ref{Hd}, we can get the following nice error estimate. 

\begin{thm}\label{thm3} 
Suppose that all eigenvalues of $A$ are in $\{z: |z-1|< 1\}$ and 
write $A=I-B$. Then, for any matrix norm such that $\|B\|<1$, 
 the sequence $\{X_k\}$ generated by Schr\"oder's method satisfies 
$$
\|X_k-A^{1/p}\|\le \|B\|^{(m+1)^k}. 
$$
\end{thm}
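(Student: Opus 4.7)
The plan is to compare the series $X_k = \sum_{i\ge 0} c_{k,i} B^i$ supplied by Theorem \ref{thm2} term-by-term with the binomial expansion $A^{1/p} = (I-B)^{1/p} = \sum_{i\ge 0} b_i B^i$, and to control the resulting tail using the sign pattern established in Theorem \ref{Hd}. Setting $N = (m+1)^k$, Theorem \ref{thm2} gives $c_{k,i} = b_i$ for $0 \le i \le N-1$, so only the tail survives:
$$
X_k - A^{1/p} \;=\; \sum_{i\ge N} (c_{k,i} - b_i)\,B^i.
$$
Both series converge absolutely in the given matrix norm because $\|B\| < 1$ and $\sum_{i\ge 0} |b_i| = 1$ (from $b_0 = 1$, $b_i<0$ for $i\ge 1$, and $\sum_{i\ge 0} b_i = 0$).

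Next I would invoke Theorem \ref{Hd}: for every $i \ge 1$ the sequence $k \mapsto c_{k,i}$ decreases from $c_{0,i} = 0$ down to the limiting value $b_i$, so $b_i \le c_{k,i} \le 0$, which is equivalent to
$$
0 \;\le\; c_{k,i} - b_i \;\le\; -b_i \;=\; |b_i|.
$$
Substituting into the tail above and using $\|B^i\| \le \|B\|^i$ together with $\|B\|^i \le \|B\|^N$ for $i \ge N$, I obtain
$$
\|X_k - A^{1/p}\| \;\le\; \sum_{i \ge N} |b_i|\,\|B\|^i \;\le\; \|B\|^N \sum_{i \ge N} |b_i|.
$$
The remaining sum equals $s_N = \sum_{i=0}^{N-1} b_i$ (again by $\sum b_i = 0$), which is $\le 1$ as recorded immediately before the theorem, so $\|X_k - A^{1/p}\| \le \|B\|^N = \|B\|^{(m+1)^k}$.

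The main obstacle, such as it is, is choosing the right two-sided bound on $c_{k,i} - b_i$. A naive triangle inequality would yield only $|c_{k,i} - b_i| \le |c_{k,i}| + |b_i|$, giving an unwanted factor of roughly $2$ in the tail; the sharper estimate $|c_{k,i} - b_i| \le |b_i|$ requires \emph{both} halves of Theorem \ref{Hd}, namely $c_{k,i} \le 0$ \emph{and} $c_{k,i} \ge b_i$. Once that two-sided estimate is in hand, the remainder of the argument is a one-line geometric telescoping that uses nothing beyond $\sum b_i = 0$ and $\|B\| < 1$.
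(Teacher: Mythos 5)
Your proof is correct and follows essentially the same route as the paper: cancel the first $(m+1)^k$ terms via Theorem \ref{thm2}, use the two-sided bound $0\le c_{k,i}-b_i\le -b_i$ from Theorem \ref{Hd}, and sum the tail using $s_{(m+1)^k}\le 1$. One harmless slip: $\sum_{i\ge 0}|b_i|=2$ (not $1$), since $b_0=1$ and $\sum_{i\ge 1}(-b_i)=1$; this does not affect the argument, as absolute convergence already follows from $|b_i|\le 1$ and $\|B\|<1$.
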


\begin{proof}
We have by Theorem \ref{thm2} that
$$
X_k-A^{1/p}=\sum_{i=(m+1)^k}^{\infty}(c_{k,i}-b_i)B^i. 
$$
By Theorem \ref{Hd}, we have $0\le  c_{k,i}-b_i\le -b_i$ ($k\ge 0$, $i\ge 1$). 
It follows that 
\begin{eqnarray*}
\|X_k-A^{1/p}\|&\le& \sum_{i=(m+1)^k}^{\infty}(-b_i)\|B\|^i\le \sum_{i=(m+1)^k}^{\infty}(-b_i)\|B\|^{(m+1)^k}\\
&=&s_{(m+1)^k}\|B\|^{(m+1)^k}\le \|B\|^{(m+1)^k}. 
\end{eqnarray*}
\end{proof}

Note that we have actually given a sharper upper bound in the proof.

We now consider the computation of $A^{1/p}$, where $A$ is a nonsingular $M$-matrix or 
a real nonsingular $H$-matrix with positive diagonal entries.
It is known \cite{ando80,fisc83,john82} that $A^{1/p}$ is a nonsingular $M$-matrix for every nonsingular $M$-matrix $A$. 
It has been proved in \cite{guo10} that
when $A$ is a real nonsingular $H$-matrix with positive diagonal 
entries, so is  $A^{1/p}$.

As in \cite{guo10}, we let ${\mathcal M}_1$ be the set of all nonsingular $M$-matrices whose diagonal entries are in $(0, 1]$, 
and ${\mathcal H}_1$ be the set of all
real nonsingular $H$-matrices whose diagonal entries are in $(0, 1]$. 

We assume 
$A=I-B$ is in ${\mathcal M}_1$ (so $B\ge 0$) or ${\mathcal H}_1$. We can see from the binomial expansion that 
$A^{1/p}\in {\mathcal M}_1$ when $A\in {\mathcal M}_1$ and that $A^{1/p}\in {\mathcal H}_1$ when $A\in {\mathcal H}_1$. 
To find $(I-B)^{1/p}$ we generate a sequence $\{X_k\}$ by Schr\"oder's method, with $X_0=I$.

When $A\in {\mathcal M}_1$, we have the following monotonic convergence result. 

\begin{thm}\label{mono}
Suppose $A\in {\mathcal M}_1$. 
Then the sequence $\{X_k\}$ generated by Schr\"oder's 
method is monotonically decreasing and converges to $A^{1/p}$. 
\end{thm}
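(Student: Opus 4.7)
The plan is to establish the monotonic decrease through the matrix version of identity \eqref{xxS} combined with the nonnegativity results of Lemmas \ref{ppp0} and \ref{xRxH}, and then to obtain convergence by appealing directly to the error estimate in Theorem \ref{thm3}.

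Since $A\in\mathcal{M}_1$ means $A=I-B$ with $B\ge 0$ and $\rho(B)<1$, each iterate $X_k$ is a rational function of $B$, so all the matrices in sight commute. The matrix version of \eqref{xxS} reads
\begin{equation*}
X_k - X_{k+1} = X_k R(X_k)\sum_{i=1}^{m}(-b_i)\bigl(R(X_k)\bigr)^{i-1},
\end{equation*}
and the scalars $-b_i$ are already positive for $1\le i\le m$ by \eqref{b0bi}. Thus the task reduces to showing that $R(X_k)$ and $X_kR(X_k)$ are entrywise nonnegative.

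To do this, I would substitute $B$ for $z$ in the scalar power series representing $R(x_k)(z)$ and $x_k(z)R(x_k)(z)$. Lemmas \ref{ppp0} and \ref{xRxH} give nonnegative coefficients for these series; since $B\ge 0$, every partial sum is an entrywise nonnegative matrix and the limits are $R(X_k)\ge 0$ and $X_kR(X_k)\ge 0$. Multiplying out, every factor on the right-hand side of the displayed identity is entrywise nonnegative, so $X_k-X_{k+1}\ge 0$. This establishes the monotonic decrease. For convergence to $A^{1/p}$, I would pick any matrix norm with $\|B\|<1$, which exists because $\rho(B)<1$, and read off $\|X_k-A^{1/p}\|\le\|B\|^{(m+1)^k}\to 0$ directly from Theorem \ref{thm3}.

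The main obstacle I foresee is the rigorous justification that substitution of $B$ into the scalar power series of Lemmas \ref{ppp0} and \ref{xRxH} is legitimate, i.e., that these series have radius of convergence at least $\rho(B)$. This is the same technical point handled for Newton's and Halley's methods in \cite{lugu18}, and it can be tracked through the Schr\"oder recursion on the disk $|z|<1$, using that $T_m(z)$ does not vanish in the relevant region so that $R(x_k)(z)$ and $x_k(z)R(x_k)(z)$ are analytic there. Once this is settled, the argument is essentially just assembling the pieces built up in Section 3.
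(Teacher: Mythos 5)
Your proposal is correct, but it assembles the pieces differently from the paper. The paper's own proof stays entirely at the level of the coefficients $c_{k,i}$: by Theorem \ref{thm2} each iterate has the expansion $X_k=\sum_{i=0}^{\infty}c_{k,i}B^i$, and by Theorem \ref{Hd} the coefficients satisfy $c_{k+1,i}\le c_{k,i}$ for $i\ge 1$, so $X_k-X_{k+1}=\sum_{i\ge 1}(c_{k,i}-c_{k+1,i})B^i\ge 0$ because $B\ge 0$; convergence is quoted from Theorem \ref{thm3} exactly as you do. You instead work at the matrix level with the matrix version of \eqref{xxS} and import the nonnegativity of $R(X_k)$ and $X_kR(X_k)$ by substituting $B$ into the scalar series of Lemmas \ref{ppp0} and \ref{xRxH}. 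This is in effect a re-proof, in matrix form, of the monotonicity half of Theorem \ref{Hd} (whose proof uses precisely \eqref{xxS} and those two lemmas), so the mathematical content is the same; what differs is the packaging. The price of your route is exactly the obstacle you flag: you must justify that $R(X_k)$ and $X_kR(X_k)$ agree with the scalar power series evaluated at $B$ and that those series converge on a disk containing $\rho(B)$ — a functional-calculus step that is routine (each $X_k$ is a rational function of $B$, and the scalar expansions are valid on $|z|<1$) but is rendered unnecessary by the paper's route, since Theorem \ref{thm2} already supplies the expansion of $X_k$ itself and only a coefficient comparison remains. The benefit of your route is that it makes the mechanism of the decrease visible directly in the matrix iteration (each update subtracts a product of entrywise nonnegative matrices), whereas the paper's argument is shorter and avoids any substitution issue. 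Either way, the conclusion and the supporting lemmas are the same.
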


\begin{proof}
When $A\in {\mathcal M}_1$,  we can  write $A=I-B$ with $B\ge 0$ and $\rho(B)<1$. 
So all eigenvalues of $A$ are  in the open disk $\{z: \ |z-1|<1\}$. 
The convergence of $X_k$ to $A^{1/p}$ is known from Theorem \ref{thm3} for example. By Theorem \ref{thm2}, 
the sequence $\{X_k\}$ generated by Schr\"oder's 
method has the power series expansion $X_k=\sum_{i=0}^{\infty}c_{k,i}B^i=I+ \sum_{i=1}^{\infty}c_{k,i}B^i$. 
 It follows from Theorem \ref{Hd} that 
$X_k\ge X_{k+1}$ for all $k\ge 0$. 
\end{proof}

The following structure-preserving property of Schr\"oder's method follows readily from the above theorem. 

\begin{cor}
Let $A$ be in ${\mathcal M}_1$ and $\{X_k\}$ be generated by 
Schr\"oder's method. 
Then for all $k\ge 0$, $X_k$ are in ${\mathcal M}_1$. 
\end{cor}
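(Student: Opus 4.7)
The plan is to leverage Theorem~\ref{mono} together with the series structure already established, and then invoke a standard characterization of nonsingular $M$-matrices. First, since $A=I-B\in {\mathcal M}_1$, we have $B\ge 0$ (entrywise) and $\rho(B)<1$. By Theorem~\ref{thm2}, $X_k=I+\sum_{i=1}^{\infty}c_{k,i}B^i$, and by Theorem~\ref{Hd}, $c_{k,i}\le 0$ for all $k\ge 0$ and $i\ge 1$. Setting $C_k=-\sum_{i=1}^{\infty}c_{k,i}B^i$ gives $C_k\ge 0$ and $X_k=I-C_k$. This immediately shows that $X_k$ is a Z-matrix (all off-diagonal entries are nonpositive) and that every diagonal entry of $X_k$ is at most $1$.

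Next, I would use the monotonic decrease from Theorem~\ref{mono}, which is entrywise because $X_k-X_{k+1}$ has, by Theorem~\ref{Hd}, a nonnegative power series expansion in $B\ge 0$. Passing to the limit yields $X_k\ge A^{1/p}$ entrywise. Since $A\in {\mathcal M}_1$ implies $A^{1/p}\in {\mathcal M}_1$ (as recalled in the paper), the diagonal entries of $A^{1/p}$ are strictly positive. Consequently the diagonal entries of $X_k$ are strictly positive, and combined with the upper bound from the previous step, they lie in $(0,1]$.

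Finally, to conclude that $X_k$ is a nonsingular $M$-matrix I would apply the characterization: a Z-matrix $X$ is a nonsingular $M$-matrix if and only if there exists $v>0$ with $Xv>0$. Since $A^{1/p}$ is a nonsingular $M$-matrix, such a $v>0$ exists with $A^{1/p}v>0$, and then
\[
X_kv \;=\; A^{1/p}v + (X_k-A^{1/p})v \;\ge\; A^{1/p}v \;>\; 0,
\]
using $X_k-A^{1/p}\ge 0$ and $v>0$. Thus $X_k$ is a nonsingular $M$-matrix whose diagonal entries lie in $(0,1]$, i.e., $X_k\in {\mathcal M}_1$. I do not expect a serious obstacle here: the substantive work is already in Theorems~\ref{thm2}, \ref{Hd}, and~\ref{mono}. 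The only point requiring care is confirming that the monotonicity in Theorem~\ref{mono} is entrywise and selecting the right M-matrix characterization so that the comparison with $A^{1/p}$ can be exploited cleanly.
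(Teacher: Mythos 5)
Your proof is correct and follows essentially the same route as the paper: establish that $X_k$ is a $Z$-matrix with diagonal entries at most $1$ via the series expansion, and conclude via the entrywise comparison $X_k\ge A^{1/p}$ from Theorem~\ref{mono} that $X_k$ is a nonsingular $M$-matrix. The only difference is that you spell out the standard comparison fact (a $Z$-matrix dominating a nonsingular $M$-matrix is itself a nonsingular $M$-matrix) through the semipositivity characterization $Xv>0$ for some $v>0$, which the paper simply invokes.
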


\begin{proof}
We know that $A^{1/p}$ is in ${\mathcal M}_1$. For each $k\ge 0$, $X_k $ is a $Z$-matrix and $X_k\ge A^{1/p}$ by Theorem \ref{mono}. 
So $X_k$ is an $M$-matrix. The diagonal entries of $X_k$ are in $(0, 1]$ since $X_k\le X_0=I$. 
\end{proof}

We also have the following structure-preserving property. 

\begin{thm}
Let $A$ be in ${\mathcal H}_1$  and $\{X_k\}$ be generated by 
Schr\"oder's method. 
Then for all $k\ge 0$, $X_k$ are in  ${\mathcal H}_1$. 
\end{thm}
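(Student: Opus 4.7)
The plan is to compare $\{X_k\}$ entrywise with the Schr\"oder iterates $\{Y_k\}$ for the comparison matrix $\mathcal{M}(A)$, which lies in $\mathcal{M}_1$ by hypothesis, and use the $\mathcal{M}_1$ structure of $Y_k$ already established in the preceding corollary to force $\mathcal{M}(X_k)$ to be a nonsingular $M$-matrix.

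First I would write $A=I-B$ and define the nonnegative matrix $\widetilde B$ by $\widetilde B_{ii}=1-a_{ii}\in[0,1)$ and $\widetilde B_{ij}=|b_{ij}|$ for $i\ne j$, so that $\mathcal{M}(A)=I-\widetilde B$. Since $A\in\mathcal H_1$, $\mathcal{M}(A)\in\mathcal M_1$; in particular $\widetilde B\ge 0$ entrywise and $\rho(\widetilde B)<1$, and the entrywise bound $|B|\le\widetilde B$ gives $\rho(B)\le\rho(\widetilde B)<1$, so the iteration is well defined and the results of the earlier sections apply. Let $\{Y_k\}$ be the Schr\"oder iterates for $\mathcal{M}(A)$ starting at $Y_0=I$; by the preceding corollary, $Y_k\in\mathcal M_1$ for every $k\ge 0$.

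The core step is the entrywise inequality $\mathcal{M}(X_k)\ge Y_k$. By Theorem \ref{thm2}, the two sequences share expansion coefficients,
\[
X_k=I+\sum_{i\ge 1}c_{k,i}B^i,\qquad Y_k=I+\sum_{i\ge 1}c_{k,i}\widetilde B^i,
\]
and $c_{k,i}\le 0$ for $i\ge 1$ by Theorem \ref{Hd}. Combining the entrywise bound $|B^i|\le\widetilde B^i$ with the triangle inequality produces the termwise estimate $|X_k-I|\le I-Y_k$. Read on the diagonal this gives $(X_k)_{ii}\ge(Y_k)_{ii}>0$, so the diagonal of $X_k$ is positive; read off the diagonal it gives $|(X_k)_{ij}|\le -(Y_k)_{ij}$, which is exactly $\mathcal{M}(X_k)_{ij}\ge(Y_k)_{ij}$. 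Putting the two together, $\mathcal{M}(X_k)\ge Y_k$ entrywise.

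Since $\mathcal{M}(X_k)$ is a $Z$-matrix dominating the nonsingular $M$-matrix $Y_k$, a standard comparison principle forces $\mathcal{M}(X_k)$ itself to be a nonsingular $M$-matrix, so $X_k$ is a real nonsingular $H$-matrix with positive diagonal. The remaining piece is the diagonal upper bound $(X_k)_{ii}\le 1$ required for membership in $\mathcal H_1$, and this is the step I expect to be the main obstacle: the triangle-inequality route only yields $(X_k)_{ii}\le 2-(Y_k)_{ii}$, which is too weak. I would attempt to tighten it by a direct induction on the iteration $X_{k+1}=X_kT_m(I-AX_k^{-p})$, aiming to show a monotone decrease $(X_{k+1})_{ii}\le(X_k)_{ii}$ in parallel with the matrix monotonicity used in the $\mathcal M_1$ proof; this would give $(X_k)_{ii}\le(X_0)_{ii}=1$ and complete the argument.
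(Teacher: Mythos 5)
Your comparison argument is exactly the route the paper intends: its proof of this theorem is only a pointer to \cite{guo10} together with Theorem \ref{Hd}, and the details being referenced are precisely what you wrote. Writing $A=I-B$, noting that $a_{ii}\le 1$ gives $B_{ii}\ge 0$ so that $\widetilde B=|B|$ and $\mathcal{M}(A)=I-\widetilde B\in\mathcal{M}_1$, expanding $X_k=I+\sum_{i\ge1}c_{k,i}B^i$ and the iterates $Y_k=I+\sum_{i\ge1}c_{k,i}\widetilde B^i$ for $\mathcal{M}(A)$ with the same coefficients (Theorem \ref{thm2}), using $c_{k,i}\le 0$ (Theorem \ref{Hd}) and $|B^i|\le\widetilde B^i$ to get $\mathcal{M}(X_k)\ge Y_k$, and then invoking the fact that a $Z$-matrix dominating a nonsingular $M$-matrix is a nonsingular $M$-matrix: all of this is correct and yields that each $X_k$ is a real nonsingular $H$-matrix with positive diagonal entries.

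The step you flag as the remaining obstacle is, however, a genuine gap, and the repair you propose cannot succeed: the diagonal entries of $X_k$ need not decrease and can exceed $1$. Take $p=2$, $m=1$ (Newton) and $A=\left(\begin{smallmatrix}1&-\beta\\ \beta&1\end{smallmatrix}\right)$ with $0<\beta<1$, which lies in $\mathcal{H}_1$. Then $(X_1)_{11}=1$, while $X_2=\frac12(X_1+AX_1^{-1})$ has $(X_2)_{11}=\frac12\left(1+\frac{1+\beta^2/2}{1+\beta^2/4}\right)=1+\frac{\beta^2/8}{1+\beta^2/4}>1$; in terms of the expansion, $B=I-A$ satisfies $B^2=-\beta^2 I$, so $(X_2)_{11}=1-c_{2,2}\beta^2+\cdots>1$ because $c_{2,2}=b_2<0$. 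Even the limit violates the bound: $(A^{1/2})_{11}=(1+\beta^2)^{1/4}\cos\bigl(\tfrac12\arctan\beta\bigr)>1$ for all $0<\beta<1$. So once $B$ has entries of mixed sign, the condition ``diagonal entries in $(0,1]$'' is not preserved by the iteration (nor by $A\mapsto A^{1/p}$), and no induction of the kind you sketch, nor any other argument, can supply it. In other words, the obstacle you ran into reflects an overstatement in the theorem as literally formulated (and in the paper's earlier remark that $A^{1/p}\in\mathcal{H}_1$ whenever $A\in\mathcal{H}_1$) rather than a missing trick: the provable conclusion, and the one delivered by your first three paragraphs, is that every $X_k$ is a real nonsingular $H$-matrix with positive diagonal entries. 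With the conclusion read in that weaker form, your proof is complete, and the final paragraph of your proposal should be dropped rather than repaired; contrast this with the genuine $\mathcal{M}_1$ case, where $B\ge 0$ makes the diagonal bound follow at once from Theorem \ref{mono}.
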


\begin{proof}
The result can be  shown as  in \cite{guo10}, using Theorem \ref{Hd}. 
\end{proof}

\section*{Acknowledgments} 

The research of Chun-Hua Guo was supported in part by a grant from the Natural Sciences and Engineering Research Council of Canada.

\end{document}